\documentclass[final]{siamart1116}
\usepackage{multirow}
\usepackage{times} 
\usepackage{amsmath,lipsum} 
\usepackage{amssymb,amsfonts}  
\usepackage{latexsym,theorem,epsfig}
\usepackage{graphicx} 
\usepackage{dsfont}
\usepackage{mathtools}
\usepackage{subfigure}
\usepackage{lscape}
\usepackage{color}
\usepackage{float}
\usepackage{enumerate}
\usepackage[usenames,dvipsnames]{pstricks}
\usepackage{epsfig}
\usepackage{pst-grad} 
\usepackage{pst-plot} 
\usepackage{mathrsfs}
\usepackage{extarrows}
\usepackage[percent]{overpic}
\usepackage{epstopdf}
\usepackage[misc]{ifsym}
\usepackage{caption}
\usepackage{lipsum}
\usepackage{amsfonts}
\usepackage{graphicx}
\usepackage{epstopdf}
\usepackage{algorithmic}

\usepackage[english]{babel}
\usepackage{threeparttable, tablefootnote}

\usepackage[a4paper, total={6in, 8in}]{geometry}

\usepackage{array}
\newcolumntype{L}[1]{>{\raggedright\let\newline\\\arraybackslash\hspace{0pt}}m{#1}}
\newcolumntype{C}[1]{>{\centering\let\newline\\\arraybackslash\hspace{0pt}}m{#1}}
\newcolumntype{R}[1]{>{\raggedleft\let\newline\\\arraybackslash\hspace{0pt}}m{#1}}

\newsiamthm{remark}{Remark}
\newsiamthm{example}{Example}

\allowdisplaybreaks

\def\xb{\mathbf{x}}

\DeclareMathOperator*{\argmin}{arg\,min}
\DeclareMathOperator*{\argmax}{arg\,max}
\DeclareMathOperator{\spn}{span}

\numberwithin{theorem}{section}

\newcommand{\TheTitle}{Optimal Algorithms for Distributed Optimization} 
\newcommand{\TheAuthors}{C.A.\ Uribe, S. Lee, A. Gasnikov and A. Nedi\'{c}}

\headers{\TheTitle}{\TheAuthors}

\title{{\TheTitle}\thanks{\textbf{An extended version of this work can be found in~\cite{Uribe2018new}}. \newline The work of A. Nedi\'{c} and C.A.\ Uribe was supported by the National Science Foundation under grant no. CPS~15-44953. The work of A. Gasnikov was supported by RFBR 18-29-03071 mk and MD-1320.2018.1.} }

\author{
	C.A. Uribe\thanks{ECE Department and Coordinated Science Laboratory, University of Illinois (\email{cauribe2@illinois.edu}).}
	\and
	S. Lee\thanks{Yahoo! Research  (\email{soominl@yahoo-inc.com}).}
	\and
	A. Gasnikov\thanks{Moscow Institute of Physics and Technology, Dolgoprudnyi 141700, Moscow
		Oblast, Russia.
		(\email{gasnikov@yandex.ru}).} 
	\and
	A. Nedi\'{c} \footnotemark[4] \thanks{ECEE Department, Arizona State University (\email{angelia.nedich@asu.edu}).}
}

\begin{document}
	\maketitle
	\begin{abstract}
		In this paper, we study the optimal convergence rate for distributed convex optimization problems in networks. We model the communication restrictions imposed by the network as a set of affine constraints and provide optimal complexity bounds for four different setups, namely: the function $F(\xb) \triangleq \sum_{i=1}^{m}f_i(\xb)$ is strongly convex and smooth, either strongly convex or smooth or just convex. Our results show that Nesterov's accelerated gradient descent on the dual problem can be executed in a distributed manner and obtains the same optimal rates as in the centralized version of the problem (up to constant or logarithmic factors) with an additional cost related to the spectral gap of the interaction matrix. Finally, we discuss some extensions to the proposed setup such as proximal friendly functions, time-varying graphs, improvement of the condition numbers.
	\end{abstract}
	
	\section{Introduction}
	
	The study of distributed algorithms can be traced back to classic papers from the 70s and 80s~\cite{aum76,bor82,tsi84,gen86,coo90,deg74,gil93}. The adoption of distributed optimization algorithms on several fronts of applied and theoretical machine learning, robotics, and resource allocation has increased the attention on such methods in recent years \cite{xia06,rab04,kon15,kra13,ned17e}. The particular flexibilities induced by the distributed setup make them suitable for large-scale problems involving large quantities of data  \cite{bot10,boy11,aba16,ned16w,ned15}. 
	
	Initial algorithms for distributed optimization such as distributed subgradient methods were shown successful for solving optimization problems in a distributed manner over networks \cite{ned09,ned09b,ram10,ned13}. Nevertheless, these algorithms are particularly slow compared with their centralized counterparts. Moreover, from the optimization perspective, proposing and analyzing algorithms with equivalent performance to their centralized counterparts have always been a priority. In a recent stream of literature, new distributed methods that achieve linear rates for strongly convex and smooth problems have been proposed in \cite{ned17,ned16w,shi15,lan17,sca17}. 
	
	One can identify three main approaches to the study of distributed algorithms. In \cite{ned16w,ned17,ned17r}, the focus is achieving linear convergence rate for strongly convex and smooth problems. These results require some minimal information about the topology of the network and provide explicit statements about the dependency of the convergence rate on the problem parameters. Specifically, polynomial scalability is shown with the network parameter for particular choices of small enough step-sizes and even uncoordinated step-sizes are allowed \cite{ned17r}. One particular advantage of this approach is that can handle time-varying and directed graphs. Nevertheless, optimal dependencies on the problem parameters and tight convergence rate bounds are far less understood for such algorithms. More recently, in \cite{ned17} new methods were proposed where it was shown that $O((m^2 + \sqrt{L/\mu}m)\log \varepsilon^{-1})$ iterations are required to find an $\varepsilon$ solution to the optimization problem when the function is $\mu$-strongly convex and $L$-smooth, where $m$ is the number of nodes in the fixed undirected network. Recently in \cite{sun17}, a unified approach has been proposed for analysis of the convergence rate of distributed optimization algorithms via a semidefinite programming characterization. There, too, the focus on strongly convex and smooth functions. This approach provides an innovative procedure to numerically certify worst-case rates of a plethora of distributed algorithms, which can be useful to fine-tune parameters in existing algorithms based on feasibility conditions of a semidefinite program. Nevertheless, the questions on the specific dependency on the problem parameters and optimality certifications of the algorithms remain open. Similarly, in \cite{jak17}, another unifying approach has been proposed, that encompasses several existing algorithms such as those in \cite{shi15,qu17}. This newly proposed general method is able to recover existing rates and achieves an $\varepsilon$ precision in $O(\sqrt{L/ (\mu \lambda_2)} \log \varepsilon^{-1})$ iterations, where $\lambda_2$ is the second largest eigenvalue of the interaction matrix. Finally, a third approach was recently introduced in \cite{sca17}, where the first optimal algorithm for distributed optimization problems was proposed. This new method achieves an $\varepsilon$ precision in $O(\sqrt{L/\mu}(1 + \tau / \sqrt{\gamma}) \log \varepsilon^{-1})$ iterations for \mbox{$\mu$-strongly} convex and $L$-smooth problems, where $\tau$ is the diameter of the network and $\gamma$ is the normalized eigengap of the interaction matrix. Even though extra information about the topology of the network is required, the work in \cite{sca17} provides a coherent understanding of the optimal convergence rates and its dependencies on the communication network.
	
	In this paper, we follow the approach in \cite{sca17} to study the problem of distributed optimization over networks. Particularly, we consider the following optimization problem
	\begin{align}\label{main_problem}
	\min_{\xb \in \mathbb{R}^n} F(\xb) & \qquad \text{where } \qquad F(\xb)\triangleq \sum\limits_{i=1}^{m}f_i(\xb),
	\end{align}
	where each $f_i: \mathbb{R}^n \to \mathbb{R}$ is a convex function. Moreover, this problem needs to be solved in a distributed manner where each function $f_i$ is known by an agent $i$ only and agents can interact with other agents over a network. We provide algorithms with provable optimal convergence rates for the cases where the function $F(\xb)$ is strongly convex and smooth, only strongly convex and Lipschitz, only smooth or just convex and Lipschitz. In each of the cases, we will study optimal algorithms for solving convex optimization problems with affine constraints \cite{ani17,che16,gas17}. Initially, we will consider the case when the construction of an explicit dual problem is possible. Later, we will remove this assumption. Moreover, we consider the case of proximal friendly functions. Additional extensions to time-varying graphs and improving the condition number are discussed.
	
	Our results match known optimal complexity bounds for centralized convex optimization (obtained by classical methods such as Nesterov's Fast Gradient Method \cite{nes83}), with an additional cost induced by the network of communication constraints. This extra cost appears in the form of a multiplicative term proportional to the square root of the spectral gap of the interaction matrix. In summary, our main results provide an algorithm that achieves $\varepsilon$ accuracy on any fixed, connected and undirected graph according  \cref{tab:summary_other}, where universal constants are hidden for simplicity.

\bgroup
\def\arraystretch{1.7}
	\begin{table}[ht]
	\centering
	\resizebox{\columnwidth}{!}{
	\begin{tabular}{|c|C{1.5cm}|c|c|c|c|} \hline
		 \bf Approach &\multicolumn{1}{|C{1.5cm}|}{\bf Reference} & \multicolumn{1}{|C{2.7cm}|}{\bf $\mu$-strongly convex and $L$-smooth}    & \multicolumn{1}{|C{2.2cm}|}{\bf  $\mu$-strongly convex and \mbox{$M$-Lipschitz}}   & \multicolumn{1}{|C{1.5cm}|}{\bf $L$-smooth}   & \multicolumn{1}{|C{2.1cm}|}{\bf \mbox{$M$-Lipschitz}}   \\ \hline
		 Centralized	&\cite{Nemirovskii1983}	& $\tilde{O}\left( \sqrt{\frac{L}{\mu}} \right)$ &$\tilde{O}\left( \frac{M^2}{\mu \varepsilon} \right) $ &   $\tilde{O}\left( \sqrt{\frac{L}{\varepsilon} }\right) $  &  $\tilde{O}\left( \frac{M^2}{\varepsilon^2}\right) $ \\\hline
		\multirow{7}{*}{\parbox[c]{1.5cm}{\centering Primal}} 
		&\cite{Qu2017} &$\tilde O\left( \left( \frac{L}{\mu} \right) ^{5/7}m^3\right) $ & $-$ & $\tilde O\left( \frac{1}{\varepsilon^{5 / 7}}\right)^{\text{***}}$ & $-$\\
		&\cite{ned17} & $\tilde{O}\left( m^2 + \sqrt{\frac{L}{\mu}}m\right)$ & $-$& $o\left( \frac{1}{\varepsilon}\right) $& $o\left( \frac{1}{\varepsilon}\right)^\text{**} $\\
		&\cite{ols14} & $-$ & $-$& $-$& $\tilde{O}\left( \frac{M^2m}{\varepsilon^2}\right) $\\
		&\cite{Doan2017} & $\tilde{O}\left( \frac{L}{\mu}m^2\right)$ &$-$ & $-$ & $-$ \\
		&\cite{Lakshmanan2008} 		&  $-$&$-$ & $\tilde{O}\left( \frac{L }{\varepsilon} m^3\right)$ & $-$  \\ 
		&\cite{Necoara2013}	& $\tilde{O}\left( \frac{L}{\mu}m^4\right)$ &$-$ &   $\tilde{O}\left( \frac{L }{\varepsilon} m^4\right)$  &$-$\\
		& \cite{jak17} & $\tilde{O}\left( \sqrt{\frac{L}{\mu}}m^2\right)^{\text{****}}$ & & & \\ \hline
		\multirow{2}{*}{\parbox[c]{1.5cm}{\centering Dual Friendly}} &\cite{sca17}$^\text{*}$	&  $\tilde{O}\left( \sqrt{\frac{L}{\mu}}m \right)$ & $-$ & $-$ & $-$\\
		&This paper$^\text{*}$& $\tilde{O}\left( \sqrt{\frac{L}{\mu}}m \right)$ &$\tilde{O}\left( \sqrt{\frac{M^2}{\mu \varepsilon}} m\right) $ &   $\tilde{O}\left( \sqrt{\frac{L }{\varepsilon} }m\right) $  &  $\tilde{O}\left( \frac{M}{\varepsilon}m\right) $\\ \hline
		\multicolumn{6}{L{\columnwidth}}{$^\text{*}$ The function $F(x)$ is assumed dual friendly. \hspace{7cm}  $^\text{**}$ Additionally, it is assumed functions are proximal friendly. \hspace{7cm}$^\text{***}$ An iteration complexity of $\tilde{O}(\sqrt{1 / \varepsilon} ) $ is shown if the objective is the composition of a linear map and a strongly convex and smooth function. \hspace{6cm} $^\text{****}$ A linear dependence on $m$ is achieved if $L$ is sufficiently close to $\mu$.}
	\end{tabular}
	}
	\caption{\textbf{Iteration Complexity of Distributed Optimization Algorithms}. The iteration complexity refers to the required number of oracle calls. For distributed algorithms based on primal iterations this translates to computations of gradients of the local functions for each of the agents. On the other side, for dual based algorithms, the complexity refers to computations of the gradient of the Lagrange dual function, which in translates to the number of communication rounds in the network. }
	\label{tab:summary_other}
	\end{table}
	
	This paper is organized as follows: Section \ref{sec:problem} introduces the problem of distributed optimization over networks. Section \ref{sec:prelim} presents a series of definitions and auxiliary results that will help in the exposition of the main results. Section \ref{sec:main} provides our main result regarding the optimal convergence rates for the solution of distributed optimization problems for the different variations of smoothness and strong convexity properties. In Section \ref{sec:consensus} we study the specific cost of having a distributed setup where we use as a guiding example the consensus problem, for which we find an optimal algorithm regarding the condition numbers and the properties of the graph. In Sections \ref{sec:no_dual} and \ref{sec:prox}, \ref{sec:condition} we discuss extensions when the studied function is not \textit{dual-friendly} but proximal operations are easy to compute. Addutionally we discuss how to improve the condition numbers. In Section \ref{sec:discussion} we provide some remarks of less developed extensions for the distributed optimization setup, particularly, we discuss time-varying/directed graphs, the communication time versus the computation time and working with general $p$-norms, with $p\geq1$. Finally, in we present some conclusions.
	
	\vspace{1em}
	\noindent\textbf{Notation: }
	Generally, we will use the superscript $i$ or $j$ to denote agents, while the subscript $k$ will denote time (or iterations of an algorithm). We denote as $[A]_{ij}$ the entry of the matrix $A$ at its $i$-th row and $j$-th column and $I_{n}$ is the identity matrix of size $n$. We denote the largest singular value of a matrix $A$ as $\sigma_{\max}(A) = \lambda_{\max}(A^TA) = \max \{\lambda \mid \exists x \neq 0, A^TAx = \lambda x \}$. We use $\sigma_{\min}(A) = \min \{\lambda >0 \mid \exists x \neq 0, A^TAx = \lambda x \}$ {as the smallest nonzero singular value}. Also, we define $\chi(A^TA) = \frac{\sigma_{\max}{(A)}}{\sigma_{\min}{(A)}}$. The notation $\tilde{O}$ is used to ignore logarithmic factors in an estimate.

	\section{Problem Statement}\label{sec:problem}
	
	In this section we will introduce the problem of distributed optimization. Initially, let us introduce a stacked column vector ${{{x} = [x_1^T,x_2^T,\hdots,x_m^T]^T \in \mathbb{R}^{mn}}}$ to rewrite the problem in Eq. \eqref{main_problem} in an equivalent form as
	\begin{align*}
	\min_{x_1 = \hdots =x_m} {F(x)} & = \sum\limits_{i=1}^{m}f_i(x_i).
	\end{align*}
	
	Now, suppose that we are required to solve this problem in a distributed manner over a network. We model such a network as a fixed connected undirected graph $\mathcal{G} = (V,E)$, where $V$ is a set of {$m$} nodes and $E$ is a set of edges. The network structure imposes information constraints, namely, each node $i$ has access to the function $f_i$ only, {and a node can exchange information only with its immediate neighbors, i.e., a node $i$ can communicate with node $j$ if and only if $(i,j)\in E$}.
	
	We can represent the communication constraints imposed by the network by introducing a set of constraints to the original optimization problem in Eq.\cref{main_problem}. Initially, define the Laplacian matrix $\bar W{\in \mathbb{R}^{m\times m}}$ of the graph $\mathcal{G}$ as
	\begin{align*}
	[\bar W]_{ij} = \begin{cases}
	-1,  & \text{if } (i,j) \in E,\\
	\text{deg}(i), &\text{if } i= j, \\
	0,  & \text{otherwise }.
	\end{cases}
	\end{align*}
	where $\text{deg}(i)$ is the degree of the node $i$, i.e., the number of neighbors of the node. Finally, define the communication matrix, sometimes called interaction matrix as $W = \bar W \otimes I_n$, where $\otimes$ indicates the Kronecker product.
	
	One can verify that $W$ is a nonnegative semidefinite matrix, with the following properties:
	\begin{itemize}
		\item $W{x} = 0$ if and only if {$x_1 = \hdots = x_m$}.
		\item $\sqrt{W}{x} = 0$ if and only if {$x_1 = \hdots = x_m$}.
		\item $\sigma_{\max} (\sqrt{W}) = \lambda_{\max} (W)$.
	\end{itemize}
	Therefore, one can equivalently rewrite the problem in Eq. \eqref{main_problem} as:
	\begin{align}\label{consensus_problem2}
	\min_{\sqrt{W} x=0} {F(x)} & = \sum\limits_{i=1}^{m}f_i(x_i),
	\end{align}
	where the constraint $\sqrt{W} x=0$ is equivalent to \mbox{$x_1 = \hdots  = x_m$} given that $\ker (\sqrt{W}) = \spn(\boldsymbol{1})$.

	\section{Preliminaries}\label{sec:prelim}
	
	Before presenting our main results, in this section, we will provide a set of definitions and preliminary information that we will use throughout the paper in the remaining sections. 
	
		\begin{remark}
		For simplicity of exposition, we will present our results considering the $2$-norm only, which is called the Euclidean setup. Nevertheless, in Section \ref{sec:discussion} we will point out the generalization to other norms.
	\end{remark}
	
	\begin{definition}\label{def:strong}
		A function $f(\cdot)$ is a $\mu$-strongly convex function if for any $x,y$ it holds that
		\begin{align*}
		f(y) \geq f(x) + \left\langle \nabla f(x),y-x \right\rangle + \frac{\mu}{2}\|x-y\|^2_2,
		\end{align*}
		where $\nabla f(x)$ is any subgradient of $f(\cdot)$ at $x$. 
	\end{definition}
	
	Particularly, if each of the functions $f_i(x_i)$ in the problem in Eq.\eqref{consensus_problem2} is $\mu_i$-strongly convex then $F(x)$ is $\mu$-strongly convex, with $\mu = \min\limits_{i=1,\hdots,m} \mu_i$.
	
	\begin{definition}\label{def:smooth}
		A function $f( \cdot )$ has $L$-Lipschitz continuous gradient if it is differentiable and its gradient satisfies the Lipschitz condition, i.e. for any $x,y$
		\begin{align*}
		\|\nabla f(x) - \nabla f(y)\|_2 & \leq L \|x-y\|_2
		\end{align*}
		A function having $L$-Lipschitz continuous gradient is also referred as $L$-smooth. 
	\end{definition}
	
	If each of the functions $f_i(x_i)$ in the problem in Eq.\eqref{consensus_problem2} has $L_i$-Lipschitz continuous gradients then $F(x)$ has $L$-Lipschitz continuous gradient with $L = \max\limits_{i=1,\hdots,m} L_i$. 
	
	Our main algorithmic tool will be Nesterov's Fast Gradient Method (FGM) \cite{nes13}. Next, in \cref{alg:nesterov} we state one variant of the FGM method for a $\mu$-strongly convex and $L$-smooth function $F(x)$. Other variants of this method can also be found in \cite{nes13,bec09,lan11}.
	
	\begin{algorithm}
		\caption{Nesterov's Fast Gradient Method}
		\label{alg:nesterov}
		\begin{algorithmic}[1]
			\STATE{Choose $y_0 = x_0 \in \mathbb{R}^n$}
			\WHILE{ stopping criteria }
			\STATE{$x_{k+1} = y_k - \frac{1}{L} \nabla f (y_k)$}
			\STATE{$y_{k+1} = x_{k+1} + \frac{\sqrt{L \vphantom{\mu}} - \sqrt{\vphantom{L}\mu}}{\sqrt{L}+ \sqrt{\vphantom{L}\mu}}(x_{k+1} - x_k)$}
			\ENDWHILE
			\RETURN $x_k$
		\end{algorithmic}
	\end{algorithm}
	
	Specifically it holds for \cref{alg:nesterov} that 
	\begin{align*}
	f(x_k) - f^* \leq L\left( 1- \sqrt{\frac{\mu}{L}} \right)^k\|x_0 - x^*\|_2^2, 
	\end{align*}
	where $f^*$ denotes the minimum value of the function $f(x)$ and $x^*$ is its minimizer. This result implies a geometric convergence rate, where one can find a solution that is $\varepsilon$ close to the optimal in 
	\begin{align}\label{rate_nest}
	N = O\left(\sqrt{\frac{L}{\mu}} \log \left( \frac{L\|x_0 -x^*\|^2_2}{\varepsilon} \right) \right)
	\end{align}
	iterations.
	
	In what follows we will consider a $\mu$-strongly convex and $L$-smooth function $f(x)$ and the general optimization problem with linear constraints
	\begin{align}\label{main2}
	\min_{Ax=0} f(x)
	\end{align} 
	where the Lagrangian dual problem of Eq. \eqref{main2} is
	\begin{align*}
	\max_y \left\lbrace \min_{{x}} \left( f(x) - \left\langle A^Ty,x \right\rangle \right) \right\rbrace.
	\end{align*}
	
	Now, rewrite the Lagrangian dual problem in its equivalent form as a minimization problem
	\begin{align}\label{dual_problem22}
	\min_y \varphi(y) & \qquad \text{where} \qquad \varphi(y) = \max_x \left\lbrace \left\langle A^Ty,x \right\rangle -f(x) \right\rbrace,
	\end{align}
	and $\varphi(y)$ is $\mu_\varphi = \frac{\sigma_{\min}(A)}{L}$-strongly convex in $\ker(A^T)^{\perp}$ and has $L_\varphi = \frac{\sigma_{\max}(A)}{\mu}$-Lipschitz continuous gradients.
	
	From the Demyanov-Danskin's theorem (see Proposition $4.5.1$ in \cite{Bertsekas2003})  it follows that \mbox{$\nabla \varphi(y) = Ax^*(A^Ty) $} where $x^*(A^Ty)$ is the unique solution to the inner maximization problem  
	\begin{align*}
	x^*(A^Ty) & = \argmax_x \left\lbrace \left\langle A^Ty,x \right\rangle -f(x) \right\rbrace.
	\end{align*}
	
%
	In order to find $x^*(A^Ty)$ one can use optimal (randomized) numerical methods \cite{nes13,nes94}. Nonetheless, initially we will assume that we have access to $x^*(A^Ty)$ explicitly. Later in Section~\ref{sec:no_dual} we will provide convergence rate estimates this assumption does not hold. 
	
	\begin{definition} 
		A function $f(x)$ is \textit{dual-friendly} if when considering the optimization problem in Eq.\eqref{main2} one has immediate access to an explicit solution $x^*(A^Ty)$ to the dual subproblem or it can be computed efficiently.
	\end{definition}
	
		In general, the matrix $A$ might not be full row rank. Then, the dual problem in Eq.\eqref{dual_problem22} can have multiple solutions of the form $y^* + \ker(A^T)$. If the solution is not unique, we will choose $y^*$ with the smallest norm solution. Moreover, we will denote its norm as $R = \|y^*\|_2$ and assume that $R < \infty$. 
		
		Note that we will use the result in Eq.\cref{rate_nest} applied to the dual problem in Eq.\eqref{main2}, which is not strongly convex in the ordinary sense (in the whole space). Nevertheless, since we will select $y_0 = x_0$ in \cref{alg:nesterov} as the initial condition, we will work in the linear space of gradients on which we have strong convexity.
	
	We will be interested in finding solutions to the problem in Eq.\eqref{main2} that are arbitrarily close to an optimal solution, both in terms of the error of the primal problem and the linear constraints feasibility. For this, we introduce the following definition.
	\begin{definition}
		We say that a point $\hat x$ is an $(\varepsilon,\tilde{\varepsilon})$-optimal solution for the problem in Eq.\eqref{main2} if the following condition holds
		\begin{align*}
		f(\hat x ) - f(x^*) \le  \varepsilon \qquad \text{and} \qquad \|A\hat x\|_2 \leq \tilde{\varepsilon}.
		\end{align*} 
		where $f(x^*)$ denotes the optimal function value the problem in Eq.\eqref{main2}. Moreover note that $\left| f(x^*(A^Ty) ) - f(x^*) \right| \le \|y\|_2\|Ax^*(A^Ty)\|_2 $
	\end{definition}

	
	To solve the dual problem in Eq.\eqref{dual_problem22} one can use the FGM algorithm that  specializes to \cref{alg:nesterov2}.
	\begin{algorithm}[H]
		\caption{Nesterov's Fast Gradient Method on the Dual Problem}
		\label{alg:nesterov2}
		\begin{algorithmic}[1]
			\STATE{Choose $y_0 = \tilde{y}_0 =0 \in \mathbb{R}^n$}
			\WHILE{ $\|y_k\|_2 \|Ax^*(A^Ty_k)\|_2 \geq \varepsilon \text{ and } \|Ax^*(A^Ty_k)\|_2 \geq \tilde{\varepsilon}$ }
			\STATE{$y_{k+1} = \tilde{y}_k - \frac{1}{L_\varphi} Ax^*(A^T\tilde{y}_k)$}
			\STATE{$\tilde{y}_{k+1} = y_{k+1} + \frac{\sqrt{L_\varphi} - \sqrt{\mu_\varphi\vphantom{L_\varphi}} }{\sqrt{L_\varphi}+ \sqrt{\mu_\varphi\vphantom{L_\varphi}}}(y_{k+1} - y_k)$}
			\ENDWHILE
			\RETURN $y_k$
		\end{algorithmic}
	\end{algorithm}

	From \cite{dev12}, we can immediately conclude that the number of oracle calls (calculations of $Ax$ and $A^Ty$) required for the \cref{alg:nesterov2} is
	\begin{align}\label{bound_dual}
	N & = O \left( \sqrt{\frac{L_\varphi}{\mu_\varphi}} \log \left(\max\left\lbrace 4 L_\varphi\frac{R^2}{\varepsilon} ,2 L_\varphi \frac{R}{\tilde{\varepsilon}}\right\rbrace  \right) \right) = \tilde{O}\left( \sqrt{\frac{L}{\mu} \chi(A^TA) }\right).
	\end{align}

	We will apply the FGM as described in \cref{alg:nesterov2} to the dual of different variations of the problem in Eq. \eqref{main_problem}. The next section presents the main results regarding the optimal convergence rates for the solution of the distributed optimization problem.
	
	\section{Main results}\label{sec:main}
	
	Our main results provide convergence rate estimates for the solution of the problem in Eq. \eqref{main_problem} (or Eq. \eqref{consensus_problem2}) for four different cases: 
	\begin{enumerate}
		\item $F(x)$ is $\mu$-strongly convex and $L$-smooth.
		\item $F(x)$ is $\mu$-strongly convex and $M$-Lipschitz.
		\item $F(x)$ is $L$-smooth.
		\item $F(x)$ is $M$-Lipschitz.
	\end{enumerate}

	\subsection{$F(x)$ is \mbox{$L$-smooth} and $\mu$-strongly convex}\label{sec:strong_smooth}
	
	This case is a specific version of the general problem presented in Eq.~\eqref{main2}. Particularly, when the linear constraints $Ax=0$ are $\sqrt{W}x=0$ and the function $f(x)$ corresponds to the function $F(x)$ as defined in Eq.\eqref{consensus_problem2}.
	
	Line $3$ and $4$ of \cref{alg:nesterov2} captures the interaction between agents and exchange of information in the distributed setting.  If we change of variables $\sqrt{W}y_k = z_k$ and \mbox{$\sqrt{W}\tilde y_k = \tilde z_k$}, then the resulting iterations can be executed in a distributed manner. The main observation is the interaction between agents is dictated by the term $Wx^*(\tilde z_k)$ which depends only on local information. Particularly, each node in the graph has its local variables $z_k^i$ and $\tilde z_k^i$, and to compute the value of these variables on the next iteration it only requires the information sent by the neighbors defined by the communication graph $\mathcal{G}$. Each node $i$ updates its local variables as
	\begin{subequations}\label{per_agent}
	\begin{align}
	z_{k+1}^i &= \tilde z_k^i - \frac{1}{L_\varphi} \sum_{j=1}^{m} W_{ij} x^{*}_j(\tilde z_k^j) \\
	\tilde z_{k+1}^i &= z_{k+1}^i + \frac{\sqrt{L_\varphi} - \sqrt{\mu_\varphi\vphantom{L_\varphi}}}{\sqrt{L_\varphi}+ \sqrt{\mu_\varphi\vphantom{L_\varphi}}}(z_{k+1}^i - z_k^i)
	\end{align}  
	\end{subequations}
	where $W_{ij} = [\bar W]_{ij} \otimes I_n$ and $x^{*}_j(w_k)$ is the components of solution of the dual subproblem shared by agent $j$.

	\begin{theorem}[Case $1$]\label{thm:case1} 
		Assume $F(x)$ is dual-friendly, \mbox{$L$-smooth} and $\mu$-strongly convex. Moreover, set $\tilde{\varepsilon} = \varepsilon/ R$. Then, after
		\begin{align*}
		N = O\left( \sqrt{\frac{L}{\mu}\chi(W)} \log \left( \frac{\sigma_{\max}(W)R^2}{\mu \epsilon} \right) \right) 
		\end{align*}
		iterations (oracle calls) of Eqs.\cref{per_agent}, the point $x^*(z_N)$ is an $(\varepsilon,\varepsilon/R )$-optimal solution to the optimization problem in Eq.~\eqref{consensus_problem2}.
	\end{theorem}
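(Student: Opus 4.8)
The plan is to read \cref{thm:case1} off the generic dual estimate \eqref{bound_dual}, by recognizing \eqref{consensus_problem2} as the instance of the linearly constrained problem \eqref{main2} with $A=\sqrt{W}$ and $f=F$. With $A=\sqrt{W}$ one has $A^{T}A=W$, so the dual function $\varphi(y)=\max_{x}\{\langle\sqrt{W}y,x\rangle-F(x)\}$ from \eqref{dual_problem22} is, by the properties recorded after it, $\mu_{\varphi}=\sigma_{\min}(\sqrt{W})/L$-strongly convex on $\ker(\sqrt{W})^{\perp}$ and has $L_{\varphi}=\sigma_{\max}(\sqrt{W})/\mu$-Lipschitz gradient. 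Using the listed spectral properties of $W$ (in particular $\sigma_{\max}(\sqrt{W})=\lambda_{\max}(W)$ and $\ker(\sqrt{W})=\spn(\boldsymbol{1})$) together with the definition of $\chi$, this gives $L_{\varphi}/\mu_{\varphi}=\frac{L}{\mu}\chi(W)$ and $L_{\varphi}=\lambda_{\max}(W)/\mu$.

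Next I would make explicit why \cref{alg:nesterov2} applied to this instance is the distributed recursion \cref{per_agent}. Since $F=\sum_{i}f_{i}$ is separable and the coupling in $\varphi$ enters only through $\langle\sqrt{W}y,x\rangle=\langle z,x\rangle$ with $z\triangleq\sqrt{W}y$, the inner maximization decouples across agents, so $x^{*}(z)$ splits into blocks $x^{*}_{i}$ with $x^{*}_{i}$ depending only on $f_{i}$ and the $i$-th coordinate block of $z$ (this is the content of the dual-friendly assumption here). By Demyanov--Danskin, $\nabla\varphi(y)=\sqrt{W}\,x^{*}(\sqrt{W}y)$, and $x^{*}$ sees $y$ only through $z$; left-multiplying lines~3--4 of \cref{alg:nesterov2} by $\sqrt{W}$ and using $\sqrt{W}\sqrt{W}=W$ turns them into the stacked form of \cref{per_agent}, whose off-diagonal couplings carry the weights $W_{ij}$ and thus use only neighbor data. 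Because $y_{0}=0$, an induction shows $y_{k}\in\mathrm{range}(\sqrt{W})=\ker(\sqrt{W})^{\perp}$ for every $k$; on this subspace $y\mapsto\sqrt{W}y$ is a bijection, the $\mu_{\varphi}$-strong convexity of $\varphi$ is genuinely in force, and the primal iterate produced is $x^{*}(z_{N})=x^{*}(\sqrt{W}y_{N})$, precisely the point \cref{alg:nesterov2} would return.

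It then remains to invoke \eqref{bound_dual}. With $y_{0}=0$ we have $R=\|y_{0}-y^{*}\|_{2}$, and the choice $\tilde\varepsilon=\varepsilon/R$ collapses the maximum inside the logarithm of \eqref{bound_dual} to $4L_{\varphi}R^{2}/\varepsilon$, so the estimate becomes $N=O(\sqrt{L_{\varphi}/\mu_{\varphi}}\,\log(L_{\varphi}R^{2}/\varepsilon))=O(\sqrt{\frac{L}{\mu}\chi(W)}\,\log(\sigma_{\max}(W)R^{2}/(\mu\varepsilon)))$, which is the asserted count (the constant $4$ is absorbed into $O$, and $\sigma_{\max}(W)$ inside the logarithm is read as $\lambda_{\max}(W)$). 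By \eqref{bound_dual}, which rests on \cite{dev12}, after this many iterations the point $x^{*}(A^{T}y_{N})=x^{*}(z_{N})$ is an $(\varepsilon,\tilde\varepsilon)$-optimal solution of \eqref{main2}, i.e.\ an $(\varepsilon,\varepsilon/R)$-optimal solution of \eqref{consensus_problem2}, which is the claim.

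The step I expect to be the real obstacle is the equivalence in the second paragraph: checking that the $z$-recursion is genuinely self-contained and neighbor-local and that it faithfully reproduces the dual fast-gradient iteration on the invariant subspace $\ker(\sqrt{W})^{\perp}$, so that the strongly convex rate of \eqref{bound_dual} applies even though $W$ is singular. A secondary and more routine point is the bookkeeping behind \eqref{bound_dual} itself: converting $\varphi(y_{N})-\varphi^{*}$ into the feasibility bound $\|\sqrt{W}x^{*}(z_{N})\|_{2}=\|\nabla\varphi(y_{N})\|_{2}\le\sqrt{2L_{\varphi}(\varphi(y_{N})-\varphi^{*})}\le\tilde\varepsilon$ and, via $|F(x^{*}(z_{N}))-F(x^{*})|\le\|y_{N}\|_{2}\|\sqrt{W}x^{*}(z_{N})\|_{2}$ with $\|y_{N}\|_{2}\le R+\sqrt{2(\varphi(y_{N})-\varphi^{*})/\mu_{\varphi}}$, into the primal error bound, and making the constants land inside the stated $N$.
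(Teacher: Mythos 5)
Your proposal is correct and follows essentially the same route as the paper: the paper's proof is a one-line instantiation of the generic dual estimate \eqref{bound_dual} with $A=\sqrt{W}$ (so $A^{T}A=W$ and $L_{\varphi}/\mu_{\varphi}=\frac{L}{\mu}\chi(W)$) and the choice $\tilde\varepsilon=\varepsilon/R$. Your additional verification that the change of variables $z_k=\sqrt{W}y_k$ yields the neighbor-local recursion \cref{per_agent} and that the iterates stay in $\ker(\sqrt{W})^{\perp}$ where strong convexity of $\varphi$ holds simply makes explicit what the paper asserts in the discussion preceding the theorem.
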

	\begin{proof}
		The desired result follows from the estimate in Eq.\cref{bound_dual}, the definition of $\chi(A^TA)$ and $A = \sqrt{W}$ and $\tilde \varepsilon =\varepsilon/R$.
	\end{proof}
	
	\subsection{$F(x)$ is $\mu$-strongly convex and $M$-Lipschitz}
	
	The assumption of $F(x)$ being \mbox{$\mu$-strongly} convex implies that the dual function $\varphi(y)$ is $L_\varphi$-smooth. In this case, we can use a regularization technique to induce the strong convexity of the dual function.
	
	\begin{definition}
		For a convex function $f(x)$, we define the strongly convex regularized function $f^\mu(x)$ with regularizer $r^2(x)$ as
		\begin{align}\label{regularizer}
		f^\mu(x) \triangleq f(x) + \frac{\mu}{2}r^2(x).
		\end{align}
	\end{definition}
	
	\begin{theorem}[Case $2$]\label{thm:case2}
		Assume $F(x)$ is dual-friendly, \mbox{$M$-Lipschitz} and $\mu$-strongly convex. Then,
		after
		\begin{align*}
		N = \tilde O\left( \sqrt{\frac{ M^2}{\mu\varepsilon} \chi(W)} \right)
		\end{align*}
		iterations (oracle calls) of Eqs.\cref{per_agent} on the regularized dual problem $\varphi^{\mu_\varphi}(y)$, with $\mu_\varphi = \varepsilon / R^2$ and regularizer $\|y\|_2^2$, the point $x^*(z_N)$ is an $(\varepsilon,\varepsilon/R )$-optimal solution to the optimization problem in Eq. \eqref{consensus_problem2}.
	\end{theorem}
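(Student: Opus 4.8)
The plan is to reduce Case 2 to Case 1 by applying Nesterov's Fast Gradient Method (as in \cref{alg:nesterov2}, i.e.\ Eqs.\cref{per_agent}) not to the dual $\varphi(y)$ directly, but to the regularized dual $\varphi^{\mu_\varphi}(y) = \varphi(y) + \frac{\mu_\varphi}{2}\|y\|_2^2$. Since $F(x)$ is $\mu$-strongly convex, the dual $\varphi(y)$ is $L_\varphi = \sigma_{\max}(\sqrt{W})/\mu = \sigma_{\max}(W)/\mu$-smooth (and convex on $\ker(\sqrt{W})^\perp$); adding the quadratic regularizer makes $\varphi^{\mu_\varphi}$ in addition $\mu_\varphi$-strongly convex on that subspace, so the complexity bound in Eq.\cref{bound_dual} applies with condition number $L_\varphi/\mu_\varphi$ replaced by $(L_\varphi + \mu_\varphi)/\mu_\varphi$.

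First I would record the effect of the regularization on the solution accuracy. Minimizing $\varphi^{\mu_\varphi}$ is equivalent (by Fenchel/Lagrange duality) to solving the primal problem in Eq.\cref{consensus_problem2} with $F$ replaced by a Moreau-type smoothed surrogate, or equivalently it returns a point whose dual objective is within $O(\mu_\varphi R^2)$ of the true dual optimum, where $R = \|y^*\|_2$. Choosing $\mu_\varphi = \varepsilon/R^2$ therefore controls this regularization bias by $O(\varepsilon)$, which together with the $(\varepsilon,\tilde\varepsilon)$-optimality machinery of the preceding definition (recall $|f(x^*(A^Ty)) - f(x^*)| \le \|y\|_2\|Ax^*(A^Ty)\|_2$) and the choice $\tilde\varepsilon = \varepsilon/R$ yields an $(\varepsilon,\varepsilon/R)$-optimal point $x^*(z_N)$. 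Second, I would plug $\mu_\varphi = \varepsilon/R^2$ into the iteration count: the condition number of $\varphi^{\mu_\varphi}$ on the gradient subspace is
\begin{align*}
\frac{L_\varphi + \mu_\varphi}{\mu_\varphi} = 1 + \frac{\sigma_{\max}(W)/\mu}{\varepsilon/R^2} = 1 + \frac{\sigma_{\max}(W) R^2}{\mu\varepsilon},
\end{align*}
so the square root appearing in Eq.\cref{bound_dual} is $\tilde O\!\left(\sqrt{\sigma_{\max}(W) R^2/(\mu\varepsilon)}\right)$. Here I would use the $M$-Lipschitz hypothesis on $F$ to bound $R = \|y^*\|_2$: the dual optimizer's norm is controlled by the subgradient norms of $F$, giving $R = O(M/\sigma_{\min}(\sqrt{W}))$ up to the relevant geometric factors, so that $R^2\,\sigma_{\max}(W) = O(M^2\,\sigma_{\max}(W)/\sigma_{\min}(W)) = O(M^2\chi(W))$ — which produces the claimed $\tilde O\!\left(\sqrt{M^2\chi(W)/(\mu\varepsilon)}\right)$ after absorbing logs and constants.

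The main obstacle is the second step: making precise the trade-off between the regularization error (which grows with $\mu_\varphi$) and the iteration count (which grows as $\mu_\varphi\to 0$), and in particular controlling $R$ in terms of $M$ so that the $R$-dependence disappears from the final rate. One must check that $R<\infty$ under the Lipschitz assumption, that the minimum-norm dual solution lives in $\ker(\sqrt{W})^\perp$ where strong convexity is available, and that the stopping criterion in \cref{alg:nesterov2} is actually reached within the stated budget when run on $\varphi^{\mu_\varphi}$ rather than $\varphi$. The remaining bookkeeping — substituting $A=\sqrt{W}$, using $\sigma_{\max}(\sqrt{W})=\lambda_{\max}(W)$, and collecting the logarithmic factors into $\tilde O(\cdot)$ — is routine and parallels the proof of \cref{thm:case1}.
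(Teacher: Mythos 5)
Your proposal is correct and follows essentially the same route as the paper: regularize the dual with $\mu_\varphi=\varepsilon/R^2$ so the regularization bias is $O(\varepsilon)$, apply the smooth strongly convex FGM complexity of Eq.~\eqref{bound_dual} to $\varphi^{\mu_\varphi}$, and eliminate $R$ via the bound $R^2\le M^2/\sigma_{\min}(\sqrt{W})$ of Eq.~\eqref{Soomin_bound}, which follows from the $M$-Lipschitz hypothesis. The only cosmetic difference is that the paper quotes a ready-made $(\varepsilon,\tilde\varepsilon)$-complexity estimate from the literature with $\tilde\varepsilon=\varepsilon/R$, whereas you recompute the effective condition number $(L_\varphi+\mu_\varphi)/\mu_\varphi$ directly; both yield the same $\tilde O\bigl(\sqrt{M^2\chi(W)/(\mu\varepsilon)}\bigr)$ count.
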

	\begin{proof}
		Initially, lets construct a regularized dual problem. Then, we have that
		\begin{align}\label{dual_regularized}
		\min_y \varphi^{\hat\mu}(y) & = \varphi(y) + \frac{\hat\mu}{2}\|y\|_2^2
		\end{align}
		where $\hat\mu = \frac{\varepsilon}{R^2}$. Assume there exists $y_N$ such that
		\begin{align*}
		\varphi^{\hat\mu}(y_N) - \varphi^{\hat\mu}_* &\leq \frac{\varepsilon}{2}
		\end{align*}
		where $\varphi^{\hat\mu}_*$ is the optimal value of the regularized dual function. Then
		\begin{align*}
		\varphi(y_N) - \varphi^* &\leq \varepsilon.
		\end{align*}
		where $\varphi^*$ is the optimal value of the dual function.
		
		It follows from \cite{gas16b} that the number of oracle calls required to find a $(\varepsilon,\tilde{\varepsilon})$-optimal solution is
		\begin{align*}
		O\left(  \sqrt{\frac{2L_\varphi(\varepsilon + 2R \tilde{\varepsilon})}{\tilde{\varepsilon}^2}}\log\left(\frac{4L_\varphi \left(  \min\limits_{Ax=0}F(x)-\min\limits_x F(x))(\varepsilon+2R\tilde{\varepsilon}\right) }{\varepsilon\cdot \tilde{\varepsilon}^2}\right)\right)
		\end{align*}
		
		Additionally, from Theorem $3$ in \cite{lan17} it follows that
		\begin{align}\label{Soomin_bound}
		R^2 & = \|y^*\|_2^2 \leq \frac{\|\nabla F(x^*)\|_2^2}{\sigma_{\min}(A)} = \frac{M^2}{\sigma_{\min}(A)}.
		\end{align}
		
		The final results follow by the definitions of $\tilde\varepsilon$ and $L_\varphi$. 
		
	\end{proof}
	Note that we typically do not know $R^2 = \|y^*\|_2^2$. Thus, we require a method to estimate the strong convexity parameter $\hat{\mu}$ which is challenging \cite{nes07,odo15}. Therefore, we can apply the restarting technique on $\mu$ \cite{odo15}. The payment for that is just an $8$ multiplicative factor in the estimation \cite{iou14}. Similarly, a generalization of the FGM algorithm can be proposed when $L_\varphi$ is unknown \cite{ani17}. The specific details of this generalizations are out of the scope of this paper.
	
	\subsection{$F(x)$ is \mbox{$L$-smooth}}
	
	In this case, we can follows the same regularization technique as in the previous scenario. However, in this case we can regularize the primal function.
	
	\begin{theorem}[Case $3$]\label{thm:case3}
		Assume $F(x)$ is dual-friendly and $L$-smooth convex. Then, after
		\begin{align*}
		N =\tilde O\left( \sqrt{\frac{ LR_x^2}{\varepsilon} \chi(W)} \right)
		\end{align*}
		iterations (oracle calls) of Eqs.\cref{per_agent} on the dual problem of the regularized function $F^\mu(x)$ with $\mu = \varepsilon / R_x^2 = \varepsilon / \|x^* - x^*(z_0)\|_2^2$ and regularizer $\|x - x^*(z_0)\|_2^2$, the point $x^*(z_N)$ is an \mbox{$(\varepsilon,\varepsilon/R )$-optimal} solution to the optimization problem in Eq. \eqref{consensus_problem2}.
	\end{theorem}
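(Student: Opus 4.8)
The plan is to reduce Case~3 to Case~1 by a \emph{primal} regularization, mirroring the \emph{dual} regularization used for \cref{thm:case2}. Since $F(x)$ is only smooth, the dual $\varphi$ is strongly convex but lacks a Lipschitz gradient; to restore smoothness of the dual I would add strong convexity to the primal. Concretely, fix $R_x = \|x^* - x^*(z_0)\|_2$, set $\mu = \varepsilon/R_x^2$, and work with
\begin{align*}
F^\mu(x) \triangleq F(x) + \frac{\mu}{2}\,\|x - x^*(z_0)\|_2^2 ,
\end{align*}
which is $\mu$-strongly convex and $(L+\mu)$-smooth. Whenever $F$ is dual-friendly, adding this quadratic keeps the inner subproblem $\argmax_x\{\langle \sqrt{W}\,y,x\rangle - F^\mu(x)\}$ explicitly solvable --- it is a proximal-type perturbation of the original one --- so $F^\mu$ is again dual-friendly and the per-agent updates in Eqs.\cref{per_agent} apply verbatim to the dual of $\min_{\sqrt{W}x=0}F^\mu(x)$.

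Next I would invoke \cref{thm:case1} for the strongly convex and smooth function $F^\mu$, with smoothness constant $L+\mu$ and strong-convexity constant $\mu$, requesting primal accuracy $\varepsilon/2$; the feasibility side is handled exactly as in \cref{thm:case1}, i.e.\ the constraint violation is driven below $\varepsilon/R$ within the same number of iterations. This yields, after
\begin{align*}
N = O\!\left( \sqrt{\frac{L+\mu}{\mu}\,\chi(W)}\;\log\!\left(\frac{\sigma_{\max}(W)R^2}{\mu\varepsilon}\right)\right)
\end{align*}
oracle calls, a point $\hat x = x^*(z_N)$ with $F^\mu(\hat x) - \min_{\sqrt{W}x=0}F^\mu(x) \le \varepsilon/2$ and $\|\sqrt{W}\hat x\|_2 \le \varepsilon/R$. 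Substituting $\mu = \varepsilon/R_x^2$ gives $\frac{L+\mu}{\mu} = \frac{LR_x^2}{\varepsilon}+1$ and the logarithmic term is absorbed into $\tilde O$, so $N = \tilde O\!\big(\sqrt{(LR_x^2/\varepsilon)\,\chi(W)}\big)$ once $LR_x^2/\varepsilon \ge 1$ (otherwise the $+1$ dominates and $N = \tilde O(\sqrt{\chi(W)})$).

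It remains to turn $\varepsilon/2$-accuracy for $F^\mu$ into $\varepsilon$-accuracy for $F$. Let $x^*_\mu$ be the minimizer of $F^\mu$ over $\{\sqrt{W}x=0\}$. Because $x^*$ is feasible for that problem,
\begin{align*}
F^\mu(x^*_\mu) \le F^\mu(x^*) = F(x^*) + \frac{\mu}{2}\,\|x^* - x^*(z_0)\|_2^2 = F(x^*) + \frac{\mu R_x^2}{2} = F(x^*) + \frac{\varepsilon}{2} ,
\end{align*}
and since $F(\hat x) \le F^\mu(\hat x)$,
\begin{align*}
F(\hat x) - F(x^*) \le \left( F^\mu(\hat x) - F^\mu(x^*_\mu) \right) + \left( F^\mu(x^*_\mu) - F(x^*) \right) \le \frac{\varepsilon}{2} + \frac{\varepsilon}{2} = \varepsilon .
\end{align*}
Together with $\|\sqrt{W}\hat x\|_2 \le \varepsilon/R$, this shows that $\hat x = x^*(z_N)$ is an $(\varepsilon,\varepsilon/R)$-optimal solution of \eqref{consensus_problem2}.

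These inequalities are routine; the real obstacle is that the correct weight $\mu = \varepsilon/R_x^2$ depends on the unknown distance $R_x = \|x^* - x^*(z_0)\|_2$, just as $R$ is unknown in \cref{thm:case2}. I would deal with this through the restart/doubling scheme on $\mu$ already invoked after \cref{thm:case2}, at the cost of only an absolute constant factor in $N$. A secondary point to verify is that the perturbed dual subproblem stays efficiently solvable and that the smoothness does not inflate beyond $O(L)$ --- the latter holds precisely because $\mu = \varepsilon/R_x^2 \le L$ in the relevant regime, so that $L+\mu \le 2L$.
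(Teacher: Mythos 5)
Your proposal is correct and follows essentially the same route as the paper: regularize the primal with $\frac{\mu}{2}\|x-x^*(z_0)\|_2^2$, $\mu=\varepsilon/R_x^2$, and reduce to Case~1, obtaining $\sqrt{(L/\mu)\chi(W)}=\sqrt{(LR_x^2/\varepsilon)\chi(W)}$ iterations. The paper's own proof is only two sentences and leaves implicit the $\varepsilon/2$-splitting argument (via $F^\mu(x^*_\mu)\le F(x^*)+\varepsilon/2$ and $F\le F^\mu$) that you spell out, as well as the caveats about unknown $R_x$ and $L+\mu\le 2L$; your version supplies these details without deviating from the paper's approach.
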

	
	\begin{proof}
		We can regularize the primal problem in Eq. \eqref{main2} such that we instead try to solve
		\begin{align*}
		\min_{Ax=0} F(x) + \frac{\mu}{2}\|x -x^*(z_0)\|_2^2,
		\end{align*}
		with $\mu = \frac{\varepsilon}{R^2_x}$. Additionally, if the minimizer $x^*$ is not unique we will consider it as the closest solution from $x^*(z_0)$, i.e. closest to the initial point of the algorithm.
		
		This take us back to the case of  \cref{sec:strong_smooth} for which we can conclude that up to logarithmic factor that the number of iterations required is:
		\begin{align}\label{result_smooth}
		\tilde{O}\left( \sqrt{\frac{L R_x^2}{\varepsilon}\chi(A^TA)}\right).
		\end{align}
	\end{proof}
	
	\subsection{$F(x)$ is $M$-Lipschitz}
	
	\begin{theorem}[Case $4$]\label{thm:case4}
		Assume $F(x)$ is dual-friendly and $M$-Lipschitz. Then, after
		\begin{align*}
		N = \tilde O\left( \sqrt{\frac{ M^2R_x^2}{\varepsilon^2} \chi(W)} \right) 
		\end{align*}
		iterations (oracle calls) of Eqs.\cref{per_agent} on the regularized dual problem $\varphi^{\mu_\varphi}(y)$, with $\mu_\varphi = \varepsilon / R^2$ and regularizer $\|y\|_2^2$, of the regularized function $F^\mu(x)$ with $\mu  = \varepsilon / R_x^2$ and regularizer $\|x - x^*(z_0)\|_2^2$, the point $x^*(z_N)$ is an \mbox{$(\varepsilon,\varepsilon/R )$-optimal} solution to the optimization problem in Eq. \eqref{consensus_problem2}.
	\end{theorem}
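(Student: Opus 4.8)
The plan is to chain together the two regularizations already used for \cref{thm:case2,thm:case3}. Since $F(x)$ is only $M$-Lipschitz, the Lagrange dual $\varphi(y)$ of $\min_{\sqrt{W}x=0}F(x)$ is neither strongly convex nor smooth, so neither \cref{thm:case2} nor \cref{thm:case3} applies on its own; the idea is to regularize the primal once to make it strongly convex, and then regularize the resulting dual once more to make it strongly convex, landing back in the smooth-and-strongly-convex setting of \cref{sec:strong_smooth} where the estimate in \eqref{bound_dual} is available.

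First I would regularize the primal exactly as in the proof of \cref{thm:case3}: replace $F$ by $F^\mu(x)=F(x)+\frac{\mu}{2}\|x-x^*(z_0)\|_2^2$ with $\mu=\varepsilon/R_x^2$ and $R_x=\|x^*-x^*(z_0)\|_2$, so that $F^\mu$ is $\mu$-strongly convex. Because the value of the regularizer at the constrained minimizer $x^*$ is $\frac{\mu}{2}R_x^2=\varepsilon/2$, any point that is $\varepsilon/2$-suboptimal for $\min_{\sqrt{W}x=0}F^\mu$ is $\varepsilon$-suboptimal for $\min_{\sqrt{W}x=0}F$, while the feasibility residual $\|\sqrt{W}\hat x\|_2$ is unchanged. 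Now $F^\mu$ is $\mu$-strongly convex but still only Lipschitz, so its dual $\varphi$ is $L_\varphi=\sigma_{\max}(\sqrt{W})/\mu$-smooth but not strongly convex, which is exactly the situation treated in \cref{thm:case2}. Applying the second regularization, I would run the distributed iterations \cref{per_agent} on $\varphi^{\mu_\varphi}(y)=\varphi(y)+\frac{\mu_\varphi}{2}\|y\|_2^2$ with $\mu_\varphi=\varepsilon/R^2$, where $R=\|y^*\|_2$ is the smallest-norm dual optimizer of the $F^\mu$-problem, splitting the accuracy budget by a further factor $1/2$ as in \cref{thm:case2}; since $\frac{\mu_\varphi}{2}R^2=\varepsilon/2$, an $(\varepsilon,\varepsilon/R)$-optimal solution of the doubly-regularized problem transfers to an $(\varepsilon,\varepsilon/R)$-optimal solution $x^*(z_N)$ of the original problem in \eqref{consensus_problem2}.

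It then remains to count oracle calls. Since $\varphi^{\mu_\varphi}$ is $\mu_\varphi$-strongly convex and $L_\varphi$-smooth, \eqref{bound_dual} gives $N=\tilde O\big(\sqrt{L_\varphi/\mu_\varphi}\big)$. Substituting $A=\sqrt{W}$, $\mu=\varepsilon/R_x^2$ and $\mu_\varphi=\varepsilon/R^2$ gives $L_\varphi=\sigma_{\max}(\sqrt{W})R_x^2/\varepsilon$ and hence $L_\varphi/\mu_\varphi=\sigma_{\max}(\sqrt{W})\,R_x^2\,R^2/\varepsilon^2$. To control $R$ I would invoke the bound \eqref{Soomin_bound} (Theorem~3 in \cite{lan17}), applied now to $F^\mu$: since $\nabla F^\mu(x^*)=\nabla F(x^*)+\mu(x^*-x^*(z_0))$ has norm at most $M+\mu R_x=M+\varepsilon/R_x=O(M)$, one obtains $R^2\le \|\nabla F^\mu(x^*)\|_2^2/\sigma_{\min}(\sqrt{W})=O(M^2/\sigma_{\min}(\sqrt{W}))$. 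Combining, $L_\varphi/\mu_\varphi=O\big(\frac{M^2R_x^2}{\varepsilon^2}\chi(W)\big)$, which yields the advertised $N=\tilde O\big(\sqrt{\frac{M^2R_x^2}{\varepsilon^2}\chi(W)}\big)$.

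The step I expect to require the most care is the bookkeeping through the two nested regularizations: one must verify that the $\varepsilon$-budget is partitioned consistently among the primal regularizer, the dual regularizer, and the FGM target accuracy, and --- more delicately --- that adding the term $\frac{\mu}{2}\|x-x^*(z_0)\|_2^2$ to $F$ does not inflate the dual optimum norm $R$ beyond lower-order corrections, so that the final rate honestly reproduces the centralized $M$-Lipschitz complexity $\tilde O(M^2/\varepsilon^2)$ multiplied by $\sqrt{\chi(W)}$. Once these points are settled the result is a direct substitution into \eqref{bound_dual} and \eqref{Soomin_bound}, just as in \cref{thm:case2,thm:case3}.
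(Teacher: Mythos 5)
Your proposal is correct and follows essentially the same route as the paper: regularize the primal with $\mu=\varepsilon/R_x^2$ to gain strong convexity, regularize the resulting dual with $\mu_\varphi=\varepsilon/R^2$, and then read off the complexity from the strongly-convex-and-smooth dual bound together with $R^2\le M^2/\sigma_{\min}(\sqrt{W})$ from Eq.~\eqref{Soomin_bound}. The paper's own proof is a two-line sketch of exactly this double regularization; your write-up supplies the error-budget bookkeeping and the check that the primal regularizer only perturbs $\|\nabla F^\mu(x^*)\|_2$ by $\varepsilon/R_x$, which the paper leaves implicit.
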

	\begin{proof}
		Similarly as in the cases above we will use regularization. We will regularize the primal problem with $\mu = \frac{\varepsilon}{R^2_x}$ and then regularize the dual problem too with $\hat{\mu} = \frac{\varepsilon}{R^2}$. Therefore, up to logarithmic factors the \cref{alg:nesterov2} will stop in no more than 
		\begin{align}\label{result2_convex}
		O\left( \sqrt{\frac{\sigma_{\max}(A)R^2_x(\varepsilon+2R\tilde{\varepsilon})}{\varepsilon\tilde\varepsilon^2}}\right)
		\end{align}
		iterations. Where the estimate in Eq. \eqref{result2_convex} is unimporvable up to logarithmic factors.
	\end{proof}
	
	\subsection{A summary}
	
	\cref{tab:summary} presents an informal summary of the results presented in this section. In particular, it shows the number of oracle calls required in each of the problems to obtain an $\varepsilon$-optimal solution. Moreover, it shows the specific dependency of the convergence rates regarding the properties of the functions considered. 
	\begin{table}[H]
		\centering
		\begin{tabular}{|c|c|c|} \hline
			\bf Property of $F(x)$ & \bf Iterations Required  \\ \hline
			$\mu$-strongly convex and $L$-smooth & $\tilde{O}\left( \sqrt{\frac{L}{\mu} \chi(W)} \right)$          \\ 
			$\mu$-strongly convex and $M$-Lipschitz&$\tilde{O}\left( \sqrt{\frac{M^2}{\mu \varepsilon} \chi(W)}\right) $ \\
			$L$-smooth &   $\tilde{O}\left( \sqrt{\frac{L R^2_x}{\varepsilon} \chi(W)}\right) $ \\
			$M$-Lipschitz& $\tilde{O}\left( \sqrt{\frac{M^2R_x^2}{\varepsilon^2} \chi(W)}\right) $\\ \hline
		\end{tabular}
		\caption{A summary of Algorithmic Performance}
		\label{tab:summary}    
	\end{table}
	
	The specific value of $\chi(W)$ and its dependency on the number of nodes $m$ has been extensively studied in the literature of distributed optimization \cite{ned15}. In \cite{ned17b}, Proposition $5$ provides an extensive list of worst-case dependencies of the spectral gap for large classes of graphs. Particularly, for fixed undirected graphs, one can construct a set of weights $W$ for which $\chi(W) = O(m^2)$ \cite{ols14}. This matches the best upper bound found in the literature of consensus and distributed optimization \cite{ore10,liu13,ols14}. As an immediate conclusion it is clear that the form in Eq.\eqref{consensus_problem2} with the constraint described as \mbox{$\sqrt{W}x=0$} should be preferred over the description as $Wx=0$, even though both representation correctly describe the consensus subspace $x_1 = \hdots x_m$. Particularly, when we pick $A = \sqrt{W}$, we have \mbox{$\chi(A^TA) = \chi(W)$} instead of \mbox{$\chi(W^TW) = \chi(W^2) \gg \chi(W)$}. For example for a star graph \mbox{$\sqrt{\chi(W)} =  O( \sqrt{m})$}, for complete graphs $\sqrt{\chi(W)} = O(1)$, for path graphs \mbox{$\sqrt{\chi(W)} \sim \text{diam}(\mathcal{G}) = O( m)$}, for regular networks $\sqrt{\chi(W)} \geq \frac{\text{diam}(G)}{\log m}$. We can observe that typically $\sqrt{\chi(W)} $ corresponds to the diameter of the graph $G$ and the square root of the spectral gap of the interaction matrix. 
	
	\section{The cost of communications}\label{sec:consensus}
	
	In this section, we will study a particular form of the optimization problem in Eq. \cref{main_problem} known as the consensus problem. This will help us further understand the execution and performance of the studied distributed optimization algorithms. Moreover, it will provide an answer to what is the additional cost, regarding algorithmic complexity, when we try to solve an optimization problem in a distributed manner. We refer to this cost as the cost of communication. We will show how, in the most basic setting, the cost of communications and it depends on the topology of the communication network over which the agents exchange information.
	
	Assume that each node in the graph $\mathcal{G}$ holds an initial numeric value {$x^i_0$} and can compute the {weighted} average of the values {held} by its neighbors at each iteration. {Let $x^i_k$ be the value of node $i$ at iteration $k$.} {We would like to know} how many iterations are required (and what is the proper algorithm) to reach consensus. Particularly, we will reach $\varepsilon$-consensus if for any $\varepsilon > 0$
	\begin{align*}
	\sqrt{\sum\limits_{i=1}^{{m}} \left(x^i_N - \frac{1}{{m}} \sum\limits_{j=1}^{{m}}{x^j_0}\right)^2} & \leq \varepsilon \sqrt{\sum\limits_{i=1}^{{m}} \left({x^i_0} - \frac{1}{{m}} \sum\limits_{j=1}^{{m}}{x^j_0}\right)^2}.
	\end{align*}
	
	This problem can be solved by considering the convex optimization problem
	\begin{align}\label{consensus}
	\min_{{x}} \frac{1}{2} \left\langle {x,Wx} \right\rangle.
	\end{align}
	where $W$ is a communication matrix as defined in Section \ref{sec:problem}.
	
	
	Theorem \ref{thm:case1} provides a direct estimate on the number of iterations required to reach consensus, particularly we need $O(\sqrt{\chi(W)}\log \varepsilon^{-1})$, where we have used the fact that \cref{consensus} is $\sigma_{\min}(\sqrt{W})$-strongly convex in  $x_0 +\ker(W)$ and has $\sigma_{\max}(\sqrt{W})$-Lipschitz continuous gradients. Moreover, it follows that this estimate cannot be improved up to constant factors. It is important to remark that the Fast Gradient Method, with $x^i_0 = \tilde y^i_0$, {does respect the information constraints imposed by the network}. That is, each node only requires the computation of the average of its immediate neighbors and aggregation can be performed in a fully distributed manner.
	
	The estimates presented in the \cref{thm:case1,thm:case2,thm:case3,thm:case4} cannot be improved up to logarithmic factors. Particularly in the smooth cases where $L < \infty$ these estimations follow (up to logarithmic factors) form the classical centralized complexity estimation of the FGM algorithm, where for a $\mu$-strongly convex and $L$-smooth convex problem we require at most
	\begin{align*}
	O\left(  \sqrt{\frac{L}{\mu} }\log \left( \frac{\mu R_x^2}{\varepsilon}\right) \right) 
	\end{align*}
	oracle calls and for a $L$-smooth convex problem we require at most
	\begin{align*}
	O\left( \sqrt{\frac{LR_x^2}{\varepsilon}}\right) .
	\end{align*}
	oracle calls.
	
	In addition to these centralized estimates we need to take into account the fact that one has to perform $\sqrt{\chi(W)} \log (\varepsilon^{-1})$ additional consensus steps at each iteration of the classical FGM. 
	
	The Laplacian matrix $\bar W$ can be written as $\bar W  = D - \tilde{A}$	where $\tilde{A}$ is the adjacency matrix of the graph $\mathcal{G}$ and $D$ is the degree matrix. In addition, $\tilde{A} = D P$ for a stochastic matrix $P$ (the transition matrix of a Markov Chain). Thus, one can apply a simple power method such as $x^{k+1} = Px^k$ to find the consensus value (see ergodic theorem for Markov Chains). Each node depends only on the computation of averages of its neighbor values because of the matrix-vector multiplication $Px^k$. As a result the number of required iterations is $O(\chi(W)\log \varepsilon^{-1})$ (\cite{ned09,tsi84,ber89}). This result can be considered as a non-accelerated weighted gradient method for the problem in Eq.~\eqref{consensus} and holds even if the graph is time-varying and directed (i.e. $\tilde{A}$ is not symmetric \cite{ned13,ned15b,ned16}). However, it is still an open question whether one can get accelerated rates as those provided by \cref{alg:nesterov2} when the graph is directed. 
	
	In the following sections, we explore extensions and open problems related to the results presented so far. We study the case when the functions are not \textit{dual-friendly}, when the function is \textit{proximal-friendly} and how to improve the condition number $L /\mu$.
		
	\section{No Explicit Dual Solution is available}\label{sec:no_dual}
	
	In all the results presented so far, we have assumed that the problem is \textit{dual-friendly} in the sense that we have readily available solutions to the dual subproblem. In this section, we will explore the case when this is not possible.
	
	Note that the dual subproblem can also be computed in a distributed manner. Specifically, we have that
	\begin{align*}
	\argmax_x \left\lbrace \left\langle A^Ty,x \right\rangle -f(x) \right\rbrace  & = \argmax_{x_i, i=1,\hdots,m} \left\lbrace \left\langle \sum_{j=1}^{m} A^T_{ij }y_j,x_i \right\rangle -f_i(x_i) \right\rbrace.
	\end{align*}
	
	When the function is smooth or when is strongly convex and smooth, one can solve the auxiliary problem
	\begin{align*}
	\max_x \left\lbrace \left\langle y ,Ax \right\rangle  -f(x) \right\rbrace  & = \left\langle y ,Ax^*(A^Ty) \right\rangle  -f(x^*(A^Ty))
	\end{align*}
	using fast gradient methods (if the function is smooth, one should make one additional regularization $\mu  = \frac{\varepsilon}{R^2_x}$) applied for the strongly convex problem. Therefore, we can find a solution for the dual problem, i.e. $x^*(A^Ty)$, in a logarithmic number of iterations from a desired relative precision $\delta$. This fact allows us avoid considering the error in the computation of $x^*(A^Ty)$. When the function is strongly convex and smooth, we can solve the auxiliary problem in $O\left(  \sqrt{\frac{L}{\mu}} \log(\delta^{-1})\right) $ oracle calls (oracle call is the calculation of $\nabla f(x)$). On the other hand, when the function is only smooth, we require $O\left(  \sqrt{ \frac{LR_x^2}{\varepsilon} }\log (\delta^{-1})\right)$ iterations. Both estimates are optimal up to logarithmic factor. Therefore, in those cases, we propose totally optimal methods.
	
	Unfortunately, this is not the case when the function $f(x)$ is not smooth. Nevertheless, in these cases we might use another approach, that gives optimal estimates in both senses: the total number of oracle calls (calculations $\nabla f(x)$) and the total number of communications ($Ax$, $A^Ty$ multiplications).
	
	Lets consider first the case where $f(x)$ is convex and apply Nesterov's smoothing technique \cite{nes05,nam14} to the dual problem
	\begin{align}\label{nes_smooth}
	\min_{Ax=0} f(x) & = \min_x \left\lbrace \max_y \left\lbrace \left\langle y,Ax \right\rangle -f(x) \right\rbrace  \right\rbrace  \nonumber \\
	& = \max_y \left\lbrace \min_x \left\lbrace f(x) + \left\langle y,Ax \right\rangle  \right\rbrace  \right\rbrace
	\end{align}
	
	Moreover, by using the bound in Eq. \eqref{Soomin_bound}, we can replace the function
	\begin{align*}
	G(Ax) = \max_y \left\langle y,Ax\right\rangle
	\end{align*}
	by
	\begin{align*}
	G_\varepsilon(Ax) = \max_y \left\lbrace \left\langle y,Ax \right\rangle  - \frac{\varepsilon}{2R^2} \|y\|_2^2\right\rbrace = \frac{R^2}{2\varepsilon} \|Ax\|_2^2.
	\end{align*}
	
	Finally, one can show that the function $G_\varepsilon(Ax)$ has a $\frac{\sigma_{\max}(A^T)R^2}{\varepsilon}$-Lipschitz continuous gradient, (note that $\sigma_{\max}(A^T) = \sigma_{\max}(A)$). So, we can solve the composite type mixed smooth/non-smooth type problem
	\begin{align}\label{composite}
	\min_{\|x\|_2 \leq R_x } \underbrace{G_\varepsilon(Ax)}_{\sim \frac{1}{\varepsilon}\text{-Lipschitz gradient}} + \underbrace{f(x)}_{M-\text{Lipschitz}}
	\end{align}
	where gradient oracle for $G_\varepsilon(Ax)$ requires a constant number of $Ax$ multiplications (because we can explicitly write the formula for $\nabla G_\varepsilon (z)$) and the gradient oracle for $f(x)$ requires one $\nabla f(x)$ calculations. Using Lan's accelerated gradient sliding \cite{lan16}, one can find an $\varepsilon$-solution (in function value) of Eq. \eqref{composite} without any auxiliary dual problem, after
	\begin{align*}
	N_{Ax} &= O\left(  \sqrt{\frac{\frac{\sigma_{\max}(A)R^2}{\varepsilon}R_x^2}{\varepsilon}} \right)
	= O\left( \sqrt{\frac{M^2R_x^2}{\varepsilon^2} \chi(A^TA)}\right) 
	\end{align*}
	oracle calls, i.e., $Ax$ multiplications and
	\begin{align*}
	N_{\nabla f(x)} &= O\left(  \frac{M^2 R_x^2}{\varepsilon^2}\right) 
	\end{align*}
	oracle calls, i.e., $\nabla f(x)$-gradient calculations. Unfortunately, in this approach we can guarantee $\|Ax_N\|\leq \frac{\varepsilon}{R}$ only in the best case \cite{ani15}.
	
	Using the restart technique \cite{iou14,jud11} one can extend Lan's accelerated gradient sliding for $f(x)$ being $\mu$-strongly convex. At the $k$-th restart the number of oracle calls is \mbox{$N_{Ax}=O\left(  \sqrt{\frac{\sigma_{\max}(A)R^2}{\mu\varepsilon}}\right)$} computations of $Ax$ and $N_{\nabla f(x)} = O\left(  \frac{2^kM^2 }{\varepsilon^2 R_x^2}\right) $ oracle calls for $\nabla f(x)$. This allows to improve estimates for the problem in Eq. \eqref{composite} in the following manner:
	\begin{align}\label{prox1}
	N_{Ax} & = O\left( \sqrt{\frac{M^2}{\mu \varepsilon} \chi(A^TA)} \log \left( \frac{\mu R^2_x}{\varepsilon}\right)  \right) 
	\end{align}
	computations of $Ax$ and
	\begin{align}\label{prox2}
	N_{\nabla f(x)} & = O \left( \frac{ M^2 }{\mu\varepsilon}\right) 
	\end{align}
	oracle calls of $\nabla f(x)$, where these estimates are optimal up to logarithmic factors. Moreover, one can extend these results to stochastic optimization problems and the estimations will not change \cite{lan17}.
	
	\section{Acceleration when $f(x)$ is a \textit{proximal-friendly} functional}\label{sec:prox}
	
	Let us consider the case when $f(x)$ is convex, but not strongly convex nor smooth, and return to Eq. \eqref{nes_smooth} which can be rewritten as
	\begin{align*}
	\min_{Ax=0} f(x) & = \max_y \min_x \left\lbrace  f(x) + \left\langle y,Ax \right\rangle \right\rbrace  \\
	& = \max_y \min_z \underbrace{\min_x \left\lbrace f(x) + \left\langle y,Ax \right\rangle + \frac{1}{2} \|x-z\|_2^2 \right\rbrace }_{G(y,z)}.
	\end{align*}
	
	We will say the function $f(x)$ is \textit{proximal-friendly}, if  we can solve the auxiliary strongly convex problem
	\begin{align*}
	\text{prox}_{f,A^Ty} (z) & = \argmin_x \left\lbrace f(x) + \left\langle y,Ax \right\rangle + \frac{1}{2}\|x-z\|_2^2\right\rbrace 
	\end{align*}
	explicitly, similarly as the definition of \textit{dual-friendly}.
	
	Therefore, given that
	\begin{align*}
	\|\nabla G(y',z') - \nabla G(y,z) \|_2 & \leq L_y\|y' - y\|_2 +L_z\|z' - z\|_2 ,
	\end{align*}
	one can find an $\varepsilon$- solution (in terms of the duality gap) of the saddle point problem
	\begin{align*}
	\max_{\|y\|_2\leq R} \min_z G(y,z)
	\end{align*}
	after $N_{Ax} = O\left( \frac{1}{\varepsilon}\right) $, $Ax$ multiplications, see \cite{bub15}, section $5.2$.
	
	Unfortunately, this does not provide any acceleration. Since, to find $\text{prox}_{f,A^Ty}(z)$ typically, one has to find an $\varepsilon$-solution of a strongly convex optimization problem. This can be done in $O(\frac{1}{\varepsilon})$ calculations of $\nabla f(x)$. So the total number of iterations will be of the order $N_{\nabla f(x)} = O\left(  \frac{1}{\varepsilon^2}\right) $.
	
	Now, let us propose another approach, consider the original problem in Eq. \eqref{main2} and add the additional term $\frac{1}{2}\|Ax\|_2^2$ which is identically zero on any feasible point $Ax=0$. Thus,
	\begin{align*}
	\min_{Ax=0} f(x) & = \min_{Ax=0} \left\lbrace f(x) + \frac{1}{2}\|Ax\|_2^2\right\rbrace  \\
	& = \min_{x,z;Ax=Az} \left\lbrace  f(x) + \max_y \left\langle -y,Az \right\rangle + \frac{1}{2}\|Ax\|_2^2 \right\rbrace  \\
	& =\min_{x,z} \left\lbrace f(x) + \max_y \left\langle -y,Az \right\rangle +  \max_{y'} \left\langle y',Az-Ax \right\rangle + \frac{1}{2}\|Ax\|_2^2 \right\rbrace  \\
	& = - \min_{y,y'} \left\lbrace  \max_x  \left( \left\langle A^Ty',x \right\rangle -f(x)\right)  +\max_{u \in \text{Im}A} \left\lbrace  \left\langle y-y',u \right\rangle    - \frac{1}{2} \|u\|^2_2 \right\rbrace \right\rbrace  ^{Az=u}\\
	& = - \min_{y,y'} \left\lbrace \varphi(y') + \frac{1}{2}\| \text{proj}_{\ker(A^T)^{\perp} }(y - y')  \|_2^2\right\rbrace 
	\end{align*}
	
	If  $\varphi(y') $ is proximal friendly, we can solve the auxiliary problem (this is the proximal version of the standard dual problem) explicitly, then using accelerated proximal gradient descent (in the space of $y$) one can find an $\varepsilon$ solution to the dual problem in $  O\left( \varepsilon^{-\frac{1}{2}}\right) $ proximal steps \cite{par14}. Moreover, the results in \cite{lin15} allows for an extension to randomized methods. Therefore, we obtain acceleration. 
	
	
	\section{Improving the condition number $\frac{L}{\mu}$ when $F(x)$ is strongly convex and smooth}\label{sec:condition}
	
	Considering the problem in Eq. \eqref{main_problem}, if each $f_i(x_i)$ is a $\mu_i$-strongly convex function , and has $L_i$-Lipschitz continuous gradients, then $F(x) $ is a \mbox{$\mu = \min_{i=1,\hdots,m}\mu_i$-strongly} convex and has $L = \max_{i=1,\hdots,m}L_i$-Lipschitz continuous gradients.
	
	As a result, the condition number $\frac{L}{\mu}$ can be large in general, at least if one of the $\mu_i$ is small. To overcome this drawback, we can formulate another regularization for the problem in Eq. \eqref{consensus_problem2} as
	\begin{align}\label{better_cond}
	\min_{\sqrt{W} x=0} F_\alpha (x) & = \sum\limits_{i=1}^{m}f_i(x_i) + \frac{\alpha}{2}\left\langle x,Wx\right\rangle
	\end{align}
	
	Furthermore, the regularized function $F_\alpha$ is $\mu \geq \min \left\lbrace \sum\limits_{i=1}^{m}\mu_i,\alpha \lambda_{\min}(W)\right\rbrace $-strongly convex and has \mbox{$L \leq  \left( \max\limits_{i=1,\hdots,m}L_i + \alpha \lambda_{\max}(W)\right) $-Lipschitz} continuous gradient. Moreover, if we set \mbox{$\alpha \simeq
		\sum\limits_{k=1}^{m}\mu_k / \lambda_{\min}(W)$}, one can solve the problem in Eq. \eqref{better_cond} with relative precision $\varepsilon$ after (see also \cite{ned16w})
	\begin{align*}
	N_{Wx} & =O\left(  \left( \frac{\max\limits_{i=1,\hdots,m}L_i}{\sum\limits_{i=1}^{m}\mu_i} + \chi(W)\right) \sqrt{\chi(W)} \log^2(\varepsilon^{-1})\right) 
	\end{align*}
	communication steps and
	\begin{align*}
	N_{\nabla F(x)} = O\left( N_{Wx}   \log^{-1}(\varepsilon^{-1})\right) 
	\end{align*}
	gradient $\nabla F(x)$ calculations.
	
	This estimate shows that we can replace the smallest strong convexity constant for the sum among all of them, but we have to pay an additive price proportional to the condition number of the graph.
	
	Using the regularization technique with $\mu_i = \frac{\varepsilon}{mR^2_{x_i}} = \frac{\varepsilon}{R^2_x}$ one can extend this result to the case where the function is just smooth.

	\section{Discussion and Conclusions}\label{sec:discussion}
	
	In this section, we will discuss a set of possible extensions and open problems related to the results presented so far. Particularly, we will explore the case when the graph is directed or changing with time. What to do if the time required for by the oracle is not comparable with the communication steps. How to handle non-Euclidean setups. 
	
	\noindent\textbf{Time-Varying/Directed Graphs:} Throughout this paper, we have assumed that the considered network of agents is fixed and undirected. This implies that the construction of the interaction matrix can be done in a distributed manner and the result will be a symmetric matrix, i.e., $W =W^T$. In a very specific case, when we have two communication networks, dual to each other, our approach will work, but this is a rare situation. Non-accelerated approaches have been shown successful in the study of time-varying directed graphs \cite{ned13,ned16,ned15b,ned16f}. They have been shown to provide solutions to the distributed optimization problem for strongly convex and smooth functions with linear rates. Nevertheless, it is an open problem to show that optimal convergence rates, comparable to those in the centralized case can be achieved for time-varying and directed graphs.
	
	Let us return to the consensus problem in Eq. \eqref{consensus}. But now assume that from time to time the matrix $W$ changes, nonetheless remaining a interaction matrix. Therefore, we have a family of nonnegative semi-definite quadratic functions with the same kernel, $\ker(W)$ (in our case described as $x_1 =  \hdots  = x_k$). How can we find a projection of $x_1^0,\hdots,x_m^0$ on this set working at each iteration with different matrices $W$? One can solve the problem in Eq. \eqref{consensus} with relative precision by the simple gradient descent method on $\chi(W)\log \varepsilon^{-1}$ communication steps because this dynamic has a Lyapunov function: that is the square distance between the current point and $\ker(W)$. Nevertheless, it is known that one can accelerate this value to $\sqrt{\chi(W)}\log \varepsilon^{-1}$ for fixed graphs. Whether acceleration is possible for time-varying graphs remains an open question to the best of the authors' knowledge. Such generalization of  Nesterov's accelerated gradient method is unknown \cite{nar05,bub15,all14,lin15,tay17,hu17,sun17}. On the other side, if one can detect the moment the graph changes, and such changes don't happen very often one can use restarting techniques \cite{fer16,gas17}, which provide the following estimations for the number of communication steps needed
	\begin{align*}
	\sqrt{\max_W \left( \frac{ \sigma_{\max}(\sqrt{W})}{\sigma_{\min}(\sqrt{W})}\right) } \log \varepsilon^{-1}.
	\end{align*}
	
	
	\noindent\textbf{Fast Gossip Steps:} CPUs in these days can read and write from and to memory at over $10$GB per second, whereas communication over $TCP/IP$ is about $10MB$ per second \cite{lan17}. Therefore, the gap between intra-node computation and inter-node communication is about $3$ order of magnitude. Communication start-up cost itself is also not negligible as it usually takes a few milliseconds. Let us consider that one node can calculate $\nabla f_i (x_i)$  (or even calculate $x^*_i(A^Ty)$) in $1$ unit of time and the communication step takes $\tau$ units of time. In case $\tau \gg 1$, in this regime all the results above seem reasonable because we first think of communication steps. However, if $\tau \ll 1$ one should use a method with multiple communication steps. Chebyshev acceleration is suggested in \cite{sca17}, where instead of using the interaction matrix $W$ one can analyze the algorithm with a different one $P_K(W)$. $P_K(W)$ is a polynomial of degree $K$, and its eigengap is maximized with a specific choice based on Chebyshev polynomials. If $K$ is chosen as $\sqrt{\chi(W)}$ then \mbox{$\chi\left( P_{\sqrt{\chi(W)}}(W)\right)  \sim 1$}.
	
\noindent\textbf{$p$-norms, with $p\geq 1$:} The cases when $F(x)$ is convex or strongly convex can be generalized to $p$-norms, with $p\geq 1$ see \cite{ani17}. Particularly, the definitions of the condition number $\chi(\cdot)$ needs to be defined accordingly. Let's introduce a norm \mbox{$\| x\|^2_p = \|x_1\|_p^2 + ... + \|x_m\|_p^2$} for \mbox{$p\geq 1$} and assume that $F(x)$ is $\mu$-strongly convex and $L$-Lipschitz continuous gradient in this (new) norm $\|\cdot \|_p$ (in $\mathbb{R}^{mn}$), see \cite{nes15}~(Lemma 1), \cite{dvu17b}~(Lemma 1) and \cite{nes05}~(Theorem 1).. Thus 
\begin{align*}
\chi(W) = \frac{\max_{\|h\|=1} \frac{<h,Wh>}{\mu}}{\min_{\|h\|=1, h \perp \text{ker}(W)} \frac{<h,Wh>}{L}}
\end{align*} 
	
	One can try to generalize this results to an intermediate level of smoothness. That is, try to propose the method for arbitrary H\"older parameter $\nu \in [0,1]$. For example, one can use Universal Nesterov's method by skipping the adaptation and proper choosing of $\delta(\nu,\epsilon)$. This is another way to obtain results in the non-smooth case as a special situation $\nu = 0$. In the dual space, we will not have classical strong convexity but just uniform convexity. However, it can be studied by introducing inexact oracle as in \cite{gas15}.
	
	\vspace{1em}
	
	We have provided convergence rate estimates for the solution of convex optimization problems in a distributed manner. The provided complexity bounds depend explicitly on the properties of the function to be optimized. If $F(x)$ is smooth, then our estimates are optimal up to logarithmic factors otherwise our estimates are optimal up to constant factors. The inclusion of the graph properties in the form of $\sqrt{\chi(W)}$ shows the additional price to be paid in contrast with classical (centralized/non-distributed) optimal estimates. The authors recognize that the proposed algorithms required, to some extent, some global knowledge about the graph properties and the condition number of the global function, nevertheless we aim to provide a theoretical foundation for the performance limits of the distributed algorithms. Further explorations of the cases where such information is not available require additional study.

	\bibliographystyle{plain} 
	\bibliography{IEEEfull,opt_dec2,bayes_cons_3}

\end{document}